\numberwithin{equation}{section}
\definecolor{webgreen}{rgb}{0,.5,0}
\definecolor{webbrown}{rgb}{.6,0,0}
\newcommand{\red}[1]{{\color{red}#1}}
\newcommand{\Z}{{\mathbb Z}}
\newtheorem{thm}{Theorem}
\newtheorem{theorem}[thm]{Theorem}
\newtheorem{lemma}{Lemma}
\newtheorem{corollary}[thm]{Corollary}
\title{Squares of Fibonacci-Like Numbers}
\author[1]{Kunle Adegoke\thanks{Corresponding author: \href{mailto:adegoke00@gmail.com}{\tt adegoke00@gmail.com}}}
\affil{Department of Physics and Engineering Physics, \mbox{Obafemi Awolowo University}, 220005 Ile-Ife, Nigeria}
\author{Tokunbo Omiyinka}
\affil{Department of Physics, \mbox{University of Alberta}, Edmonton, Alberta, T6G 2E1, Canada}
\begin{document}

\date{}

\maketitle

\begin{abstract}
\noindent We derive a general recurrence relation for squares of Fibonacci-like numbers. Various properties are developed, including double binomial summation identites. 
\end{abstract}
\section{Introduction}
The Fibonacci numbers, $F_j$, and the Lucas numbers, $L_j$, $j\in\Z$, are defined by:
\begin{equation}
F_0  = 0,\;F_1  = 1,\;F_j  = F_{j - 1}  + F_{j - 2}\; (j \ge 2),\quad F_{ - j}  = ( - 1)^{j-1} F_j
\end{equation}
and
\begin{equation}
L_0  = 2,\;L_1  = 1,\;L_j  = L_{j - 1}  + L_{j - 2}\; (j \ge 2) ,\quad L_{ - j}  = ( - 1)^j L_j\,.
\end{equation}
Both $(F_j)_{j\in\Z}$ and $(L_j)_{j\in\Z}$ are examples of a Fibonacci-like sequence. We define a Fibonacci-like sequence, $(G_j)_{j\in\Z}$, as one having the same recurrence relation as the Fibonacci sequence, but with arbitrary initial terms. Thus, given arbitrary integers $G_0$ and $G_1$, not both zero, we define
\begin{equation}
G_j=G_{j-1}+G_{j-2}\; (j \ge 2)\,;
\end{equation}
and also extend the definition to negative subscripts by writing the recurrence relation as
\begin{equation}\label{eq.ozw7j10}
\quad G_{-j}=G_{-j+2}-G_{-j+1}\,.
\end{equation}
We have \cite[equation (1.5)]{adegoke18f}
\begin{equation}
G_{-j}=(-1)^j(G_0L_j-G_j)\,.
\end{equation}
The identity (see Brousseau \cite[equation (2)]{brousseau})
\begin{equation}
F_{j - 1}^2  + F_{j + 2}^2  = 2F_j^2  + 2F_{j + 1}^2\,,
\end{equation}
or, more generally,
\begin{equation}\label{eq.vcah8wn}
G_{j - 1}^2  + G_{j + 2}^2  = 2G_j^2  + 2G_{j + 1}^2\,,
\end{equation}
is well known.

\medskip

Less familiar are identities such as
\begin{equation}\label{eq.vnkmyzu}
G_{j+2}^2+2G_{j-2}^2 = 3G_{j-1}^2+6G_{j}^2\,,
\end{equation}
\begin{equation}\label{eq.xvk6xej}
3G_{j+3}^2+G_{j-3}^2 = 16G_{j+1}^2+12G_{j}^2
\end{equation}
and
\begin{equation}\label{eq.aex38xp}
F_{k}F_{k+1}G_{j+1}^2-F_kF_{k-1}G_{j-1}^2 = G_{j+k}^2-F_{k+1}F_{k-1}G_{j}^2\,.
\end{equation}
Our aim in writing this paper is to derive the identity
\[
\begin{split}
F_s F_m F_{m - s} G_{j + k}^2  &= F_{m - s} F_{m - k} F_{s - k} G_j^2  + ( - 1)^{s + k} F_k F_m F_{m - k} G_{j + s}^2\\
&\qquad\qquad - ( - 1)^{s + k} F_k F_s F_{s - k} G_{j + m}^2\,,
\end{split}
\]
of which \eqref{eq.vcah8wn}, \eqref{eq.vnkmyzu}, \eqref{eq.xvk6xej} and \eqref{eq.aex38xp} are particular cases, being evaluations at certain $m$, $k$, $j$ and $s$ choices.

\medskip

Closed formulas are known for $\sum_{j=0}^n{x^jG_{j}^2}$ and $\sum_{j=0}^n{G_{j+k}G_{j-k}}$. We will extend these results by providing evaluations for $\sum_{j=0}^n{x^jG_{j+k}^2}$ and $\sum_{j=0}^n{x^jG_{j+k}G_{j+s}}$ for integers $n$, $s$ and $k$ and arbitrary $x$.

\medskip

Finally, we will derive double binomial identities involving the squares of Fibonacci-like numbers.
\section{Main identity}
\begin{theorem}\label{theorem.o1g1xkr}
If $j$, $k$, $m$ and $s$ are integers, then
\[
\begin{split}
F_s F_m F_{m - s} G_{j + k}^2  &= F_{m - s} F_{m - k} F_{s - k} G_j^2  + ( - 1)^{s + k} F_k F_m F_{m - k} G_{j + s}^2\\
&\qquad\qquad - ( - 1)^{s + k} F_k F_s F_{s - k} G_{j + m}^2\,.
\end{split}
\]
\end{theorem}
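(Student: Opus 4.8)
The plan is to exploit the fact that any Fibonacci-like sequence can be written in terms of the basis provided by the Fibonacci and Lucas numbers, and that identities linear in $G_j^2$, $G_{j+1}^2$ and $G_jG_{j+1}$ can be reduced to checking a small number of base cases. Concretely, I would first establish the shift identity $G_{j+k} = F_k G_{j+1} + F_{k-1} G_j$ (the standard ``addition formula'' for Fibonacci-like sequences, which follows from the recurrence by induction on $k$ and extends to negative $k$ via \eqref{eq.ozw7j10}). Squaring gives
\[
G_{j+k}^2 = F_k^2 G_{j+1}^2 + 2 F_k F_{k-1}\, G_j G_{j+1} + F_{k-1}^2 G_j^2 .
\]
Thus each of the four squared terms $G_{j+k}^2, G_j^2, G_{j+s}^2, G_{j+m}^2$ appearing in the theorem is an explicit $\Z[F_\bullet]$-linear combination of the three ``coordinates'' $G_{j+1}^2$, $G_jG_{j+1}$, $G_j^2$.

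Substituting these expansions into the claimed identity, the statement becomes equivalent to three scalar identities among Fibonacci numbers — namely that the coefficients of $G_{j+1}^2$, of $G_jG_{j+1}$, and of $G_j^2$ all vanish. Writing $a = s$, $b = k$, $c = m$ for brevity, and noting that $G_j^2$ itself corresponds to $k=0$, these three identities all have the shape
\[
F_s F_m F_{m-s}\, F_k^{[i]} = F_{m-s}F_{m-k}F_{s-k}\, F_0^{[i]} + (-1)^{s+k} F_k F_m F_{m-k}\, F_s^{[i]} - (-1)^{s+k} F_k F_s F_{s-k}\, F_m^{[i]},
\]
where $F_k^{[1]} = F_k^2$, $F_k^{[2]} = F_k F_{k-1}$, $F_k^{[3]} = F_{k-1}^2$. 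Each of these is a pure Fibonacci identity in the three free integers $k,m,s$, so I would prove them using the Binet form $F_n = (\alpha^n - \beta^n)/(\alpha-\beta)$ with $\alpha\beta = -1$, $\alpha+\beta = 1$: expand everything into monomials $\alpha^{p}\beta^{q}$ and verify cancellation. Alternatively, one can cite or derive the relevant three-term ``index-reduction'' identity for products of three Fibonacci numbers (a known consequence of, e.g., $F_aF_b - F_cF_d = (-1)^r(F_{a-r}F_{b-r} - F_{c-r}F_{d-r})$ when $a+b = c+d$) and apply it to each of the three coordinate relations.

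The main obstacle I anticipate is purely bookkeeping: after the Binet expansion, each side of each of the three scalar identities is a sum of several terms of the form $\pm\alpha^{p}\beta^{q}$, and one must organize the comparison carefully — using $\alpha\beta = -1$ to normalize all monomials to a common form $\alpha^{r}$ (or to $(-1)^t\alpha^{r}$) — so that the claimed cancellation is manifest. A cleaner route, which I would prefer for the write-up, is to observe that it suffices to verify the original identity for two independent choices of the pair $(G_0,G_1)$, say $(G_j) = (F_j)$ and $(G_j) = (L_j)$ (since these span the solution space and squares/products are quadratic, one actually needs the three functionals $G_{j+1}^2, G_jG_{j+1}, G_j^2$ to be determined, which $F$ and $L$ together with a third independent sequence such as shifts of $F$ achieve); then the whole theorem reduces to finitely many classical Fibonacci–Lucas identities. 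In either presentation the quadratic-in-$G$ reduction to Fibonacci-only identities is the conceptual core, and the remaining work is a finite, mechanical verification.
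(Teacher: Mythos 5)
Your proposal is correct, but it takes a genuinely different route from the paper. The paper starts from the three-term linear identity $F_{s-k}G_{j+m}=F_{m-k}G_{j+s}+(-1)^{s+k+1}F_{m-s}G_{j+k}$ (quoted from an earlier paper of Adegoke), squares it, and then eliminates the resulting cross-term $G_{j+s}G_{j+k}$ using the relation $2F_sF_kG_{j+k}G_{j+s}=F_s^2G_{j+k}^2+F_k^2G_{j+s}^2-F_{s-k}^2G_j^2$ (itself obtained by squaring the $m=0$ case of the same linear identity), finishing with the multiplication formula $5F_mF_n=L_{m+n}-(-1)^nL_{m-n}$ to clean up the coefficients. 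You instead expand every squared term in the quadratic basis $G_{j+1}^2$, $G_jG_{j+1}$, $G_j^2$ via the addition formula $G_{j+k}=F_kG_{j+1}+F_{k-1}G_j$ and reduce the theorem to three pure Fibonacci identities in $k,m,s$. Two of your three coordinate identities collapse immediately to Vajda's identity $F_{m-k}F_{s-c}-F_{s-k}F_{m-c}=(-1)^{s-k}F_{m-s}F_{k-c}$ (for $c=0$ and $c=1$); the third (the coefficient of $G_j^2$) is heavier but is still a routine Binet computation, and I confirmed it numerically. Your approach buys self-containedness --- it does not lean on the external linear identity --- at the cost of a more mechanical verification; the paper's route is shorter but imports its key lemma. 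One small caution: your ``cleaner'' alternative of checking the identity on three quadratically independent sequences such as $(F_j)$, $(L_j)$, $(F_{j+1})$ still leaves you with identities in the free variables $j,k,m,s$, so it does not by itself reduce the problem to a finite computation; the coordinate expansion plus Binet is the version you should actually write up.
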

\begin{proof}
Setting $m=0$ in the identity (see Adegoke \cite[Theorem 1]{adegoke18f})
\begin{equation}\label{eq.a5x1nph}
F_{s - k} G_{j + m}  = F_{m - k} G_{j + s}  + ( - 1)^{s + k + 1} F_{m - s} G_{j + k}
\end{equation}
gives
\begin{equation}
( - 1)^k F_{s - k} G_j  = F_s G_{j + k}  - F_k G_{j + s}\,,
\end{equation}
from which, by squaring and re-arranging, we get
\begin{equation}\label{eq.cgd7f12}
2F_s F_k G_{j + k} G_{j + s}  = F_s^2 G_{j + k}^2  + F_k^2 G_{j + s}^2  - F_{s - k}^2 G_j^2\,.
\end{equation}
The statement of the theorem then follows by squaring identity \eqref{eq.a5x1nph} and using \eqref{eq.cgd7f12} to eliminate the cross-term $G_{j+k}G_{j+s}$ from the right hand side, while making use also of the multiplication formula
\begin{equation}
5F_mF_n=L_{m+n}-(-1)^nL_{m-n}\,.
\end{equation}
\end{proof}
\section{Partial sums and generating function}
\begin{lemma}[{\cite[Lemma 2]{adegoke18c}}Partial sum of a $(r+1)$-term sequence]\label{lemma.qm8k37h}
Let $(X_j)$ be any arbitrary sequence, where $X_j$, $j\in\Z$, satisfies a $(r+1)$-term recurrence relation $X_j=f_1X_{j-c_1}+f_2X_{j-c_2}+\cdots+f_rX_{j-c_r}=\sum_{m=1}^r f_mX_{j-c_m}$, where $f_1$, $f_2$, $\ldots$, $f_r$ are arbitrary non-vanishing complex functions, not dependent on $j$, and $c_1$, $c_2$, $\ldots$, $c_r$ are fixed integers. Then, the following summation identity holds for arbitrary $x$ and non-negative integer $n$ :
\[
\sum_{j = 0}^n {x^j X_j }  = \frac{{\sum_{m = 1}^r {\left\{ {x^{c_m } f_m \left( {\sum_{j = 1}^{c_m } {x^{ - j} X_{ - j} }  - \sum_{j = n - c_m  + 1}^n {x^j X_j } } \right)} \right\}} }}{{1 - \sum_{m = 1}^r {x^{c_m } f_m } }}\,.
\]

\end{lemma}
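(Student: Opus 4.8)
The plan is to substitute the recurrence into the sum and then solve the resulting linear equation for the sum itself — the standard device for this kind of partial-sum identity. Write $S := \sum_{j=0}^n x^j X_j$. First I would replace each $X_j$ by $\sum_{m=1}^r f_m X_{j-c_m}$ and interchange the two finite sums to get $S = \sum_{m=1}^r f_m \sum_{j=0}^n x^j X_{j-c_m}$; this is legitimate since the recurrence is assumed to hold for every $j\in\Z$ (the sequence being defined on all of $\Z$) and every sum in sight is finite.

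Next, for each fixed $m$ I would reindex the inner sum via $i=j-c_m$, obtaining $\sum_{j=0}^n x^j X_{j-c_m} = x^{c_m}\sum_{i=-c_m}^{\,n-c_m} x^i X_i$. The crux is to re-express this window over $i$ in terms of the target window $0\le i\le n$: peel off the negative portion as $\sum_{i=-c_m}^{-1} x^i X_i = \sum_{j=1}^{c_m} x^{-j} X_{-j}$, and write $\sum_{i=0}^{n-c_m} x^i X_i = S - \sum_{j=n-c_m+1}^{n} x^j X_j$. Putting these together gives
\[
S \;=\; \sum_{m=1}^r f_m x^{c_m}\Bigl(S + \sum_{j=1}^{c_m} x^{-j} X_{-j} - \sum_{j=n-c_m+1}^{n} x^j X_j\Bigr).
\]

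Finally I would isolate $S$. Subtracting $S\sum_{m=1}^r f_m x^{c_m}$ from both sides leaves $S\bigl(1 - \sum_{m=1}^r f_m x^{c_m}\bigr)$ equal to exactly the numerator in the statement, and dividing by $1 - \sum_{m=1}^r x^{c_m} f_m$ (assumed nonzero) yields the claimed closed form. The only real obstacle is the index bookkeeping in the reindexing step: one must interpret the boundary sums $\sum_{j=1}^{c_m}$ and $\sum_{j=n-c_m+1}^{n}$ as empty in the degenerate ranges (e.g.\ when some $c_m$ fails to be a positive integer, or when $c_m>n$) and verify that in splitting $\sum_{i=-c_m}^{n-c_m}$ no term is dropped or counted twice. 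Once that is pinned down, the remainder is a one-line algebraic solve.
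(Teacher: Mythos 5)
Your proof is correct and is the standard derivation: substitute the recurrence, reindex each inner sum, and solve the resulting linear equation for $S$. The paper itself imports this lemma by citation (Lemma 2 of the reference) without reproving it, and your argument — including the caveats that the denominator $1-\sum_m x^{c_m}f_m$ must be nonzero and that the boundary sums need the usual empty/inverted-range conventions when some $c_m$ is not a positive integer — is exactly the expected one.
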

We note that a special case of Lemma~\ref{lemma.qm8k37h} was proved by Zeitlin~\cite{zeitlin64}.
\begin{theorem}\label{theorem.mofbl42}
The following identity holds for arbitrary $x$ and integers $k$ and $n$:
\[
\begin{split}
x\sum\limits_{j = 0}^n {x^j G_{j + k}^2 }  &= (xF_{k + 1} F_{k - 1}  + F_{k + 1} F_k  - x^2 F_{k - 1} F_k )S_{G,n} (x)\\
&\qquad+ xF_{k - 1} F_k (x^{n + 1} G_n^2  - G_0^2  - G_1^2  + 2G_0 G_1 )\\
&\qquad+ F_{k + 1} F_k (x^{n + 1} G_{n + 1}^2  - G_0^2 )\,,
\end{split}
\]
where
\begin{equation}\label{eq.araqz6y}
\begin{split}
S_{G,n}(x)=\sum_{j = 0}^n {x^j G_j^2 }  &=  - \frac{{(2x^2  + 2x - 1)G_0^2 }}{{x^3  - 2x^2  - 2x + 1}} - \frac{{(2x^2  - x)G_1^2 }}{{x^3  - 2x^2  - 2x + 1}}\\
&\qquad + \frac{{x^2 G_2^2 }}{{x^3  - 2x^2  - 2x + 1}} + \frac{{(2x^2  + 2x - 1)x^{n + 1} G_{n + 1}^2 }}{{x^3  - 2x^2  - 2x + 1}}\\
&\qquad + \frac{{(2x - 1)x^{n + 2} G_{n + 2}^2 }}{{x^3  - 2x^2  - 2x + 1}} - \frac{{x^{n + 3} G_{n + 3}^2 }}{{x^3  - 2x^2  - 2x + 1}}\,.
\end{split}
\end{equation}
\end{theorem}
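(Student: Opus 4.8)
The plan is to derive the formula from two inputs: a \emph{reduction identity} writing $G_{j+k}^2$ as a fixed Fibonacci-weighted combination of the three consecutive squares $G_{j-1}^2,G_j^2,G_{j+1}^2$, and the closed evaluation $S_{G,n}(x)$ of $\sum_{j=0}^n x^jG_j^2$. Granting both, one multiplies the reduction identity by $x^j$, sums over $0\le j\le n$, re-indexes the two shifted sums, and collects; the stated identity then falls out.

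\emph{The reduction identity.} Specialise Theorem~\ref{theorem.o1g1xkr} to $s=-1$, $m=1$. Since $F_{-1}=F_1=F_2=1$, $F_{1-k}=(-1)^kF_{k-1}$ and $F_{-1-k}=(-1)^kF_{k+1}$, all the signs cancel and the main identity collapses to
\[
G_{j+k}^2=F_{k+1}F_{k-1}G_j^2+F_{k+1}F_kG_{j+1}^2-F_kF_{k-1}G_{j-1}^2,
\]
which is just \eqref{eq.aex38xp} rearranged. Multiplying by $x^j$ and summing from $0$ to $n$ gives
\[
\sum_{j=0}^n x^jG_{j+k}^2=F_{k+1}F_{k-1}S_{G,n}(x)+F_{k+1}F_k\sum_{j=0}^n x^jG_{j+1}^2-F_kF_{k-1}\sum_{j=0}^n x^jG_{j-1}^2 .
\]
The shifted sums are rewritten by index changes, $\sum_{j=0}^n x^jG_{j+1}^2=x^{-1}\big(S_{G,n}(x)-G_0^2+x^{n+1}G_{n+1}^2\big)$ and $\sum_{j=0}^n x^jG_{j-1}^2=G_{-1}^2+xS_{G,n}(x)-x^{n+1}G_n^2$, together with $G_{-1}^2=(G_1-G_0)^2=G_0^2+G_1^2-2G_0G_1$. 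Multiplying through by $x$, the coefficient of $S_{G,n}(x)$ becomes $xF_{k+1}F_{k-1}+F_{k+1}F_k-x^2F_{k-1}F_k$ and the remaining terms assemble into the two boundary blocks displayed in the theorem.

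\emph{The evaluation $S_{G,n}(x)$.} Replacing $j$ by $j-2$ in \eqref{eq.vcah8wn} shows that $X_j:=G_j^2$ obeys the three-term recurrence $X_j=2X_{j-1}+2X_{j-2}-X_{j-3}$. Lemma~\ref{lemma.qm8k37h} therefore applies with $r=3$, $(f_1,f_2,f_3)=(2,2,-1)$, $(c_1,c_2,c_3)=(1,2,3)$, and its denominator $1-2x-2x^2+x^3$ is the claimed $x^3-2x^2-2x+1$. The numerator produced by the lemma carries the boundary squares $G_{-1}^2,G_{-2}^2,G_{-3}^2$ and $G_n^2,G_{n-1}^2,G_{n-2}^2$; using $G_{-1}=G_1-G_0$, $G_{-2}=2G_0-G_1$, $G_{-3}=2G_1-3G_0$ at the lower end, $G_{n+1}=G_n+G_{n-1}$, $G_{n+2}=2G_n+G_{n-1}$, $G_{n+3}=3G_n+2G_{n-1}$ at the upper end, and $2G_0G_1=G_2^2-G_0^2-G_1^2$ throughout, one rewrites everything in terms of $G_0^2,G_1^2,G_2^2$ and $G_{n+1}^2,G_{n+2}^2,G_{n+3}^2$, obtaining \eqref{eq.araqz6y}.

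The substantive work is purely computational: one must check, coefficient by coefficient, that the Fibonacci combinations in the reduction step and the boundary-term substitutions in the $S_{G,n}(x)$ step reproduce the stated numerators. Nothing is conceptually deep, but the algebra is long and sign-sensitive, so I would organise it by tracking the coefficient of $S_{G,n}(x)$, the lower boundary block, and the upper boundary block separately.
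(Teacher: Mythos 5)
Your proposal is correct and follows essentially the same route as the paper: the paper obtains the identical reduction identity $G_{j+k}^2=F_{k+1}F_{k-1}G_j^2+F_{k+1}F_kG_{j+1}^2-F_{k-1}F_kG_{j-1}^2$ by setting $s=1$, $m=-1$ in Theorem~\ref{theorem.o1g1xkr} (your choice $s=-1$, $m=1$ yields the same thing), then multiplies by $x^j$, sums, re-indexes the two shifted sums exactly as you do, and invokes \eqref{eq.araqz6y} (citing its earlier derivation while noting, as you spell out, that it also follows from \eqref{eq.vcah8wn} and Lemma~\ref{lemma.qm8k37h}).
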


\begin{proof}
First, the identity \eqref{eq.araqz6y}, derived in Adegoke \cite[equation (3.1)]{adegoke18a}, also follows from \eqref{eq.vcah8wn} and Lemma \ref{lemma.qm8k37h} with $X_j=G_j^2$.

\medskip

Note that
\begin{equation}\label{eq.nkzwbt5}
S_{F,n} (x) = \sum\limits_{j = 0}^n {x^j F_j^2 = } \frac{{x(1 - x) - (1 - 2x - 2x^2 )x^{n + 1} F_{n + 1}^2  - (1 - 2x)x^{n + 2} F_{n + 2}^2  - x^{n + 3} F_{n + 3}^2 }}{{(1 - 2x - 2x^2  + x^3 )}}\,,
\end{equation}
\begin{equation}\label{eq.xei24hh}
S_{G,n}(1)=\sum_{j=0}^n{G_j^2}=G_nG_{n+1}-G_0G_1+G_0^2\,,
\end{equation}
\begin{equation}
S_{F,n} (1) = \sum\limits_{j = 0}^n {F_j x^j }  = \frac{1}{2}F_{n + 3}^2  - \frac{1}{2}F_{n + 2}^2  - \frac{3}{2}F_{n + 1}^2  = F_n F_{n + 1}\,.
\end{equation}
Setting $s=1$ and $m=-1$ in the identity of Theorem \ref{theorem.o1g1xkr} and re-arranging, we have
\begin{equation}\label{eq.iwr94yf}
G_{j + k}^2  = F_{k + 1} F_{k - 1} G_j^2  + F_{k + 1} F_k G_{j + 1}^2  - F_{k - 1} F_k G_{j - 1}^2\,,
\end{equation}
which allows us to write
\begin{equation}\label{eq.bp2inyu}
\sum\limits_{j = 0}^n {x^j G_{j + k}^2 }  = F_{k + 1} F_{k - 1} \sum\limits_{j = 0}^n {x^j G_j^2 }  + F_{k + 1} F_k \sum\limits_{j = 0}^n {x^j G_{j + 1}^2 }  - F_{k - 1} F_k \sum\limits_{j = 0}^n {x^j G_{j - 1}^2 }\,.
\end{equation}
Now,
\begin{equation}\label{eq.vgd6iyo}
\sum\limits_{j = 0}^n {x^j G_{j + 1}^2 }  = \sum\limits_{j = 1}^{n + 1} {x^{j - 1} G_j^2 }  = \frac{1}{x}\left( {\sum\limits_{j = 0}^n {x^j G_j^2 }  - G_0^2  + x^{n + 1} G_{n + 1}^2 } \right)
\end{equation}
and
\begin{equation}\label{eq.ucegrs5}
\sum\limits_{j = 0}^n {x^j G_{j - 1}^2 }  = x\sum\limits_{j = -1}^{n - 1} {x^j G_j^2 }  = x\left( {\sum\limits_{j = 0}^n {x^j G_j^2 }  + \frac{1}{x}G_{ - 1}^2  - x^n G_n^2 } \right)\,.
\end{equation}
Using \eqref{eq.araqz6y}, \eqref{eq.vgd6iyo} and \eqref{eq.ucegrs5} in \eqref{eq.bp2inyu} produces the identity of Theorem \ref{theorem.mofbl42}.
\end{proof}
Observe that setting $x=-1$ in \eqref{eq.nkzwbt5} makes the right hand side to be an indeterminate form. Application of L'Hospital's rule however provides the evaluation of $S_{F,n} ( - 1)$. Thus, we have
\begin{equation}
\begin{split}
5( - 1)^{n - 1} S_{F,n} ( - 1) &= 5( - 1)^{n - 1} \sum\limits_{j = 0}^n {( - 1)^j F_j^2 }\\  
&= (n + 3)F_{n + 3}^2  - (3n + 8)F_{n + 2}^2  + (n - 1)F_{n + 1}^2  + ( - 1)^{n - 1} 3\,.
\end{split}
\end{equation}
Setting $x=1$ in the identity of Theorem \ref{theorem.mofbl42}, we have
\begin{equation}\label{eq.pl8q4s9}
\begin{split}
\sum\limits_{j = 0}^n {G_{j + k}^2 }  &=( F_{k+1}F_{k-1}+F_{k}^2 )(G_n G_{n + 1}  - G_0 G_1  + G_0^2 )\\
&\qquad + F_{k - 1} F_k (G_n  - G_1  + G_0 )(G_n  + G_1  - G_0 )\\
&\quad\qquad + F_{k + 1} F_k (G_{n + 1}  - G_0 )(G_{n + 1}  + G_0 )\,.
\end{split}
\end{equation}
\begin{theorem}[Generating function of $F_{j}^2$]
\begin{equation}\label{eq.mc19w1x}
\sum\limits_{j = 0}^\infty {x^j F_j^2 = } \frac{{x(1 - x) }}{{1 - 2x - 2x^2  + x^3 }}\,.
\end{equation}
\end{theorem}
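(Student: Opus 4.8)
The plan is to read off \eqref{eq.mc19w1x} as the $n\to\infty$ limit of the closed-form partial sum $S_{F,n}(x)$ that already appears in \eqref{eq.nkzwbt5}. Rearranging that identity isolates the claimed rational function plus a boundary contribution:
\[
S_{F,n}(x) = \frac{x(1 - x)}{1 - 2x - 2x^2 + x^3} - \frac{(1 - 2x - 2x^2)\,x^{n+1}F_{n+1}^2 + (1 - 2x)\,x^{n+2}F_{n+2}^2 + x^{n+3}F_{n+3}^2}{1 - 2x - 2x^2 + x^3}\,.
\]
So the whole task reduces to showing that each of the three boundary terms tends to $0$ as $n\to\infty$ on the disc where the power series $\sum_{j\ge0}x^jF_j^2$ converges.

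First I would pin down that disc. By Binet's formula $F_n = (\alpha^n - \beta^n)/\sqrt5$ with $\alpha = (1+\sqrt5)/2$ and $\beta = -1/\alpha$, one has $F_n^2 = O(\alpha^{2n})$, so $x^{n+i}F_{n+i}^2 \to 0$ precisely when $|x| < \alpha^{-2} = (3 - \sqrt5)/2$, while the series diverges for $|x| \ge \alpha^{-2}$. Since the denominator factors as $1 - 2x - 2x^2 + x^3 = (1 + x)(1 - 3x + x^2)$, its zero of smallest modulus is exactly $x = (3-\sqrt5)/2 = \alpha^{-2}$, so the rational function on the right of \eqref{eq.mc19w1x} is analytic on that same disc and equals the sum of its Taylor series there. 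Letting $n \to \infty$ in the displayed rearrangement of $S_{F,n}(x)$ then yields \eqref{eq.mc19w1x}.

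An alternative, purely formal route avoids any convergence discussion: specialize the Brousseau recurrence \eqref{eq.vcah8wn} to $G_j = F_j$ to get $F_j^2 = 2F_{j-1}^2 + 2F_{j-2}^2 - F_{j-3}^2$ for $j \ge 3$, multiply by $x^j$, sum over $j \ge 3$, and feed in the initial values $F_0^2 = 0$, $F_1^2 = F_2^2 = 1$; the shifted copies of $\sum_{j\ge0}x^jF_j^2$ recombine to give $(1 - 2x - 2x^2 + x^3)\sum_{j\ge0}x^jF_j^2 = x - x^2$, which is \eqref{eq.mc19w1x}. This is just Lemma~\ref{lemma.qm8k37h} applied with $X_j = F_j^2$ in the power-series limit. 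Either way there is no real obstacle; the only point needing care is the bookkeeping of the radius of convergence together with the $O(\alpha^{2n})$ bound on $F_n^2$, both entirely routine.
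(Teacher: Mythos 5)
Your main argument is exactly the paper's proof: the paper deduces \eqref{eq.mc19w1x} by letting $n\to\infty$ in the partial-sum formula \eqref{eq.nkzwbt5} and noting that the boundary terms $x^{n+i}F_{n+i}^2$ vanish. You simply carry this out more carefully, pinning down the disc $|x|<(3-\sqrt5)/2$ of convergence via Binet's formula (a detail the paper omits) and adding, as a bonus, an equivalent formal-power-series derivation from the recurrence $F_j^2=2F_{j-1}^2+2F_{j-2}^2-F_{j-3}^2$; both are correct.
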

\begin{proof}
Identity \eqref{eq.nkzwbt5} as $n$ approaches infinity; with $x^nF_n^2\to 0$ as $n$ approaches infinity.
\end{proof}
Next, we provide an alternative evaluation of $\sum_{j = 0}^n {x^j G_{j + k}^2 }$, not requiring the initial values $G_0$ and $G_1$ of the sequence $(G_r)_{r\in\Z}$.
\begin{theorem}\label{theorem.ei61rw0}
The following identity holds for arbitrary $x\ne -1$ and integers $k$ and $n$:
\[
\begin{split}
\sum\limits_{j = 0}^n {x^j G_{j + k}^2 }  &= S_{F,n} (x)\left( {G_{k + 1}^2  + xG_k^2  + 2(1 - x)G_k G_{k + 1} } \right)\\
&\qquad+ \left( {1 - x^{n + 1} F_n^2 } \right)\left( {G_k^2  - 2G_k G_{k + 1} } \right)\\
&\qquad\quad + \left( {\frac{{1 + ( - 1)^n x^{n + 1} }}{{1 + x}}} \right)2G_k G_{k + 1}\,.
\end{split}
\]
\end{theorem}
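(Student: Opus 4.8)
The plan is to reduce everything to the already-established partial sum $S_{F,n}(x)$ by means of the Fibonacci-like addition formula
\[
G_{j+k} = F_j G_{k+1} + F_{j-1} G_k,
\]
which is a relabelling (send $j\mapsto k$, $m\mapsto j$) of the special case $s=1$, $k=0$ of identity \eqref{eq.a5x1nph}. Squaring it yields
\[
G_{j+k}^2 = F_j^2\, G_{k+1}^2 + 2 F_j F_{j-1}\, G_k G_{k+1} + F_{j-1}^2\, G_k^2,
\]
so that $\sum_{j=0}^n x^j G_{j+k}^2$ breaks into $G_{k+1}^2$, $2G_kG_{k+1}$ and $G_k^2$ multiplied by the three partial sums $\sum_{j=0}^n x^j F_j^2$, $\sum_{j=0}^n x^j F_j F_{j-1}$ and $\sum_{j=0}^n x^j F_{j-1}^2$ respectively. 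The first of these is exactly $S_{F,n}(x)$ by \eqref{eq.nkzwbt5}.

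Next I would put the other two sums in closed form. A shift of index together with $F_{-1}=1$ gives $\sum_{j=0}^n x^j F_{j-1}^2 = 1 + x\,S_{F,n}(x) - x^{n+1}F_n^2$ (this is the Fibonacci analogue of \eqref{eq.ucegrs5}). For the mixed sum I would use the Cassini-type identity $F_{j+1}F_{j-1} - F_j^2 = (-1)^j$ combined with $F_{j+1} = F_j + F_{j-1}$ to write $F_j F_{j-1} = F_j^2 - F_{j-1}^2 + (-1)^j$; summing this against $x^j$ then expresses $\sum_{j=0}^n x^j F_j F_{j-1}$ through $S_{F,n}(x)$, the sum $\sum_{j=0}^n x^j F_{j-1}^2$ just computed, and the geometric series $\sum_{j=0}^n (-x)^j = \bigl(1+(-1)^n x^{n+1}\bigr)/(1+x)$. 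The evaluation of this geometric series is the sole place where the hypothesis $x\ne -1$ is used.

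Substituting the three evaluations back into the split sum and collecting, in turn, the coefficient of $S_{F,n}(x)$, the coefficient of $1 - x^{n+1}F_n^2$, and the leftover geometric-series term reproduces the three lines of the stated identity. I expect no genuine conceptual obstacle: the work is entirely the bookkeeping of matching the coefficients of $G_k^2$, $G_kG_{k+1}$ and $G_{k+1}^2$ on both sides, the one subtlety being that the formula as written holds only for $x\ne -1$, with the case $x=-1$ obtainable by a limiting argument of the same kind already used for $S_{F,n}(-1)$.
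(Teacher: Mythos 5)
Your proposal is correct and follows essentially the same route as the paper: square the addition formula $G_{j+k}=F_{j-1}G_k+F_jG_{k+1}$, convert the cross term via $F_jF_{j-1}=F_j^2-F_{j-1}^2+(-1)^j$ (Cassini), and sum against $x^j$ using $S_{F,n}(x)$, an index shift, and the geometric series. You in fact supply more of the bookkeeping than the paper does, and your coefficient matching checks out against the stated identity.
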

\begin{proof}
Squaring the addition formula $G_{j+k}=F_{j-1}G_k+F_jG_{k+1}$ gives
\begin{equation}\label{eq.y54t300}
G_{j+k}^2=G_k^2F_{j-1}^2+G_{k+1}^2F_j^2+2G_kG_{k+1}F_jF_{j-1}\,.
\end{equation}
But,
\begin{equation}\label{eq.eswtba3}
\begin{split}
F_j F_{j - 1}  &= F_{j - 1} (F_{j + 1}  - F_{j - 1} ) = F_{j - 1} F_{j + 1}  - F_{j - 1}^2\\
&= F_j^2  - F_{j - 1}^2  + ( - 1)^j\,,
\end{split}
\end{equation}
by Cassini's identity.
Using \eqref{eq.eswtba3} in \eqref{eq.y54t300}, multiplying through by $x^j$ and summing over $j$ yields the identity of the theorem.
\end{proof}
An immediate application of Theorem \ref{theorem.ei61rw0} is to express $\sum_{j = 0}^n {x^j G_{j + k}^2 }$ in terms of $\sum_{j = 0}^n {x^j F_{j + k}^2 }$. Thus:
\begin{equation}
\begin{split}
\sum\limits_{j = 0}^n {x^j G_{j}^2 }  &= \left( {G_{1}^2  + xG_0^2  + 2(1 - x)G_0 G_{1} } \right)\sum\limits_{j = 0}^n {x^j F_{j}^2 }\\
&\qquad+ \left( {1 - x^{n + 1} F_n^2 } \right)\left( {G_0^2  - 2G_0 G_{1} } \right)\\
&\qquad\quad + \left( {\frac{{1 + ( - 1)^n x^{n + 1} }}{{1 + x}}} \right)2G_0 G_{1}\,.
\end{split}
\end{equation}
Setting $x=1$ in the identity of Theorem \ref{theorem.ei61rw0} produces
\begin{equation}\label{eq.yhmiqcx}
\begin{split}
\sum\limits_{j = 0}^n {G_{j + k}^2 }  &= F_n F_{n + 1} (G_{k + 1}^2  + G_k^2 ) + (F_n^2  - 1)(G_{k + 1}^2  - G_{k - 1}^2 )\\
&\qquad + (1 + ( - 1)^n )G_{k + 1} G_k\,;
\end{split}
\end{equation}
so that
\begin{equation}
\sum\limits_{j = 0}^{2n - 1} {G_{j + k}^2 }  = F_{2n - 1} F_{2n} (G_{k + 1}^2  + G_k^2 ) + (F_{2n - 1}^2  - 1)(G_{k + 1}^2  - G_{k - 1}^2 )
\end{equation}
and
\begin{equation}
\begin{split}
\sum\limits_{j = 0}^{2n} {G_{j + k}^2 }  &= F_{2n} F_{2n + 1} (G_{k + 1}^2  + G_k^2 ) + (F_{2n}^2  - 1)(G_{k + 1}^2  - G_{k - 1}^2 )\\
&\qquad + 2G_{k + 1} G_k\,.
\end{split}
\end{equation}
In particular,
\begin{equation}
\sum\limits_{j = 0}^n {F_{j + k}^2 }  = F_n F_{n + 1} F_{2k + 1}  + (F_n^2  - 1)F_{2k}  + (1 + ( - 1)^n )F_k F_{k + 1}\,;
\end{equation}
so that
\begin{equation}
\sum\limits_{j = 0}^{2n - 1} {F_{j + k}^2 }  = F_{2n} F_{2n - 1} F_{2k + 1}  + (F_{2n - 1}^2  - 1)F_{2k}
\end{equation}
and
\begin{equation}
\sum\limits_{j = 0}^{2n} {F_{j + k}^2 }  = F_{2n} F_{2n + 1} F_{2k + 1}  + (F_{2n}^2  - 1)F_{2k}  + 2F_k F_{k + 1}\,.
\end{equation}
\section{Sums of products}
It is convenient to introduce the notation
\begin{equation}\label{eq.be0cw7v}
A_n (x;k) = \sum\limits_{j = 0}^n {x^j G_{j + k}^2 }\,,
\end{equation}
with its evaluation as given in Theorem \ref{theorem.mofbl42}. Note that $S_{G,n}(x)=A_n (x;0)$.
\begin{theorem}\label{theorem.vjuhqd7}
The following identity holds for integers $n$, $s$, $k$ and arbitrary $x$:
\[
2F_s F_k \sum\limits_{j = 0}^n {x^j G_{j + k} G_{j + s} }  = F_s^2 A_n (x;k) + F_k^2 A_n (x;s) - F_{s - k}^2 S_{G,n} (x)\,.
\]

\end{theorem}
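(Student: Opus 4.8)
The plan is to start from the already-established pointwise identity \eqref{eq.cgd7f12},
\[
2F_s F_k G_{j + k} G_{j + s}  = F_s^2 G_{j + k}^2  + F_k^2 G_{j + s}^2  - F_{s - k}^2 G_j^2\,,
\]
which holds for every integer $j$ (it was obtained by squaring the $m=0$ specialization of \eqref{eq.a5x1nph}, and that specialization is a genuine identity in $j$, being the square of the linear relation $(-1)^k F_{s-k} G_j = F_s G_{j+k} - F_k G_{j+s}$). Since the two sides agree term by term, I would multiply both sides by $x^j$ and sum $j$ from $0$ to $n$.

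On the left-hand side this produces exactly $2F_s F_k \sum_{j=0}^n x^j G_{j+k} G_{j+s}$, because $F_s$ and $F_k$ do not depend on $j$ and can be pulled out of the sum. On the right-hand side, the three resulting sums are, by the definition \eqref{eq.be0cw7v} together with $S_{G,n}(x) = A_n(x;0)$, equal to $F_s^2 A_n(x;k)$, $F_k^2 A_n(x;s)$ and $F_{s-k}^2 S_{G,n}(x)$ respectively. Collecting these three terms with the stated signs yields the identity of the theorem.

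There is essentially no obstacle here: the statement is a one-line consequence of \eqref{eq.cgd7f12}, and the only point meriting a moment's care is confirming that \eqref{eq.cgd7f12} is valid for all the indices $j = 0, 1, \dots, n$ that occur in the sum, which it is. If desired, fully explicit closed forms for $\sum_{j=0}^n x^j G_{j+k} G_{j+s}$ then follow by substituting the evaluation of $A_n(x;k)$ from Theorem \ref{theorem.mofbl42} and of $S_{G,n}(x)$ from \eqref{eq.araqz6y}; likewise, setting $x=1$ would give a companion of \eqref{eq.pl8q4s9} for the cross-products.
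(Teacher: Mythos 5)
Your proof is correct and is exactly the paper's argument: multiply the pointwise identity \eqref{eq.cgd7f12} by $x^j$, sum over $j=0,\dots,n$, and identify the three resulting sums as $F_s^2 A_n(x;k)$, $F_k^2 A_n(x;s)$ and $F_{s-k}^2 S_{G,n}(x)$. Nothing further is needed.
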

\begin{proof}
Multiply through \eqref{eq.cgd7f12} by $x^j$ and sum over $j$.
\end{proof}
In particular, setting $x=1$ in the identity of the theorem and making use of \eqref{eq.xei24hh} and \eqref{eq.pl8q4s9} produces
\begin{equation}\label{eq.mtiqlsj}
\begin{split}
&2F_s F_k \sum\limits_{j = 0}^n {G_{j + k} G_{j + s} }\\
&\qquad= \red{(} {F_s^2 (F_{k + 1} F_{k - 1}  + F_k^2 ) + F_k^2 (F_{s + 1} F_{s - 1}  + F_s^2 ) - F_{s - k}^2 } \red{)}\left( {G_n G_{n + 1}  - G_0 G_1  + G_0^2 } \right)\\
&\quad\qquad + F_s F_k (F_s F_{k - 1}  + F_k F_{s - 1} )(G_n  - G_1  + G_0 )(G_n  + G_1  - G_0 )\\
&\qquad\qquad + F_s F_k (F_s F_{k - 1}  + F_k F_{s - 1} )(G_{n + 1}  - G_0 )(G_{n + 1}  + G_0 )\,.
\end{split}
\end{equation}
Alternatively, setting $x=1$ in the identity of Theorem \ref{theorem.vjuhqd7} and making use of \eqref{eq.xei24hh} and \eqref{eq.yhmiqcx} gives
\begin{equation}
\begin{split}
2F_s F_k \sum\limits_{j = 0}^n {G_{j + k} G_{j + s} }  &= F_n F_{n + 1} \left( {F_s^2 (G_{k + 1}^2  + G_k^2 ) + F_k^2 (G_{s + 1}^2  + G_s^2 )} \right)\\
&\quad + \left( {F_n^2  - 1} \right)\left( {F_s^2 (G_{k + 1}^2  - G_{k - 1}^2 ) + F_k^2 (G_{s + 1}^2  - G_{s - 1}^2 )} \right)\\
&\quad + \left( {1 + ( - 1)^n } \right)\left( {F_s^2 G_{k + 1} G_k  + F_k^2 G_{s + 1} G_s } \right)\\
&\quad - F_{s - k}^2 \left( {G_n G_{n + 1}  - G_0 G_1  + G_0^2 } \right)\,;
\end{split}
\end{equation}
so that
\begin{equation}\label{eq.c0k4vfp}
\begin{split}
2F_s F_k \sum\limits_{j = 0}^{2n-1} {G_{j + k} G_{j + s} }  &= F_{2n-1} F_{2n} \left( {F_s^2 (G_{k + 1}^2  + G_k^2 ) + F_k^2 (G_{s + 1}^2  + G_s^2 )} \right)\\
&\quad + \left( {F_{2n-1}^2  - 1} \right)\left( {F_s^2 (G_{k + 1}^2  - G_{k - 1}^2 ) + F_k^2 (G_{s + 1}^2  - G_{s - 1}^2 )} \right)\\
&\quad - F_{s - k}^2 \left( {G_{2n-1} G_{2n}  - G_0 G_1  + G_0^2 } \right)
\end{split}
\end{equation}
and
\begin{equation}\label{eq.nvjuc4t}
\begin{split}
2F_s F_k \sum\limits_{j = 0}^{2n} {G_{j + k} G_{j + s} }  &= F_{2n} F_{{2n} + 1} \left( {F_s^2 (G_{k + 1}^2  + G_k^2 ) + F_k^2 (G_{s + 1}^2  + G_s^2 )} \right)\\
&\quad + \left( {F_{2n}^2  - 1} \right)\left( {F_s^2 (G_{k + 1}^2  - G_{k - 1}^2 ) + F_k^2 (G_{s + 1}^2  - G_{s - 1}^2 )} \right)\\
&\quad + 2\left( {F_s^2 G_{k + 1} G_k  + F_k^2 G_{s + 1} G_s } \right)\\
&\quad - F_{s - k}^2 \left( {G_{2n} G_{{2n} + 1}  - G_0 G_1  + G_0^2 } \right)\,.
\end{split}
\end{equation}
Identity \eqref{eq.mtiqlsj} and identities \eqref{eq.c0k4vfp} and \eqref{eq.nvjuc4t} subsume Berzsenyi's results \cite{berzsenyi}.
\begin{corollary}
The following identities hold for integer $n$ and arbitrary $x:$
\begin{equation}
\sum\limits_{j = 0}^n {x^j G_{j + 1} G_{j - 2} }  = (1 - x)S_{G,n} (x) + x^{n + 1} G_n^2  - (G_1  - G_0 )^2\,,
\end{equation}
\begin{equation}
2x\sum\limits_{j = 0}^n {x^j G_j G_{j - 1} }  = (1 - x - x^2 )S_{G,n} (x) + x^{n + 1} G_{n + 1}^2  + x^{n + 2} G_n^2  - x(G_1  - G_0 )^2  - G_0^2\,.
\end{equation}
\end{corollary}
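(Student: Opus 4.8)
The plan is to handle both identities by the same elementary device: write each product as a short $\Z$-linear combination of squares $G_i^2$ using only the basic recurrence, and then collapse the resulting sums to $S_{G,n}(x)$ by means of the index-shift formulas \eqref{eq.vgd6iyo} and \eqref{eq.ucegrs5}. One could instead specialize Theorem~\ref{theorem.vjuhqd7} to $(k,s)=(1,-2)$ for the first identity and feed in the evaluations of $A_n(x;1)$ and $A_n(x;-2)$ from Theorem~\ref{theorem.mofbl42}; but for the second product $G_jG_{j-1}$ the two subscripts are forced to be $j$ and $j-1$, so the only admissible parameter choice sends one of $F_k$, $F_s$ to $0$ (via $F_0=0$) and Theorem~\ref{theorem.vjuhqd7} degenerates. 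Hence the direct route is the uniform one.

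For the first identity, the recurrence gives $G_{j+1}=G_j+G_{j-1}$ and $G_{j-2}=G_j-G_{j-1}$, so $G_{j+1}G_{j-2}=G_j^2-G_{j-1}^2$. Multiplying by $x^j$ and summing over $0\le j\le n$ gives $\sum_{j=0}^n x^jG_{j+1}G_{j-2}=S_{G,n}(x)-\sum_{j=0}^n x^jG_{j-1}^2$; substituting \eqref{eq.ucegrs5} reduces the right side to $(1-x)S_{G,n}(x)+x^{n+1}G_n^2-G_{-1}^2$, and $G_{-1}=G_1-G_0$ (from \eqref{eq.ozw7j10} with $j=1$) rewrites $G_{-1}^2=(G_1-G_0)^2$, which is the claim.

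For the second identity the one real decision is \emph{which} square-expansion of $G_jG_{j-1}$ to use. Squaring $G_{j+1}=G_j+G_{j-1}$ gives $2G_jG_{j-1}=G_{j+1}^2-G_j^2-G_{j-1}^2$, and this is the combination that lands directly on the stated form using only the two shift formulas already in hand; by contrast, $2G_jG_{j-1}=G_j^2+G_{j-1}^2-G_{j-2}^2$ (from squaring $G_{j-2}=G_j-G_{j-1}$) is correct but would need a further shift-by-two reduction and then the explicit rational form \eqref{eq.araqz6y} to be brought into the displayed shape. With the good expansion: multiply by $x^j$, sum, and apply \eqref{eq.vgd6iyo} to $\sum x^jG_{j+1}^2$ and \eqref{eq.ucegrs5} to $\sum x^jG_{j-1}^2$ to obtain $2\sum_{j=0}^n x^jG_jG_{j-1}=(x^{-1}-1-x)S_{G,n}(x)-x^{-1}G_0^2+x^nG_{n+1}^2-G_{-1}^2+x^{n+1}G_n^2$; multiplying through by $x$ and replacing $G_{-1}^2$ by $(G_1-G_0)^2$ gives the corollary. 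Both sides are polynomials in $x$ for fixed $n$, so the $x^{-1}$ appearing in the intermediate steps causes no trouble.

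Beyond these two observations — (i) that Theorem~\ref{theorem.vjuhqd7} degenerates for $G_jG_{j-1}$, so the squares-expansion route must be carried out by hand, and (ii) that the second identity should be proved with the $G_{j+1}^2-G_j^2-G_{j-1}^2$ expansion so as to avoid invoking the closed form of $S_{G,n}(x)$ at the end — the proof is just bookkeeping, and I expect no genuine obstacle.
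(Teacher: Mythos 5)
Your proof is correct: both expansions $G_{j+1}G_{j-2}=G_j^2-G_{j-1}^2$ and $2G_jG_{j-1}=G_{j+1}^2-G_j^2-G_{j-1}^2$ are valid consequences of the recurrence, the applications of \eqref{eq.vgd6iyo} and \eqref{eq.ucegrs5} are carried out accurately, and $G_{-1}=G_1-G_0$ is the right closing step; I checked that both final displays come out exactly as stated. The paper itself supplies no proof of this corollary, but its placement immediately after Theorem~\ref{theorem.vjuhqd7} indicates the intended route is specialization of that theorem: for the first identity this works with $(k,s)=(1,-2)$ (then $2F_sF_k=-2$, $F_{s-k}^2=4$, and one feeds in $A_n(x;1)$ and $A_n(x;-2)$ from Theorem~\ref{theorem.mofbl42}), which is more computation than your two-line factorization but uses only the stated machinery. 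Your observation that the second identity cannot be read off from Theorem~\ref{theorem.vjuhqd7} at $(k,s)=(0,-1)$ because $F_0=0$ kills both sides is accurate and worth making explicit; the degeneration can be circumvented by applying the theorem to $\sum_j x^jG_{j-1}G_{j-2}$ with $(k,s)=(-1,-2)$ and re-indexing, but your direct expansion via squaring $G_{j+1}=G_j+G_{j-1}$ is cleaner and is morally the same identity \eqref{eq.cgd7f12} evaluated at parameters the theorem's normalization excludes. In short: same underlying mechanism (express the product as a linear combination of squares, then shift indices in $S_{G,n}$), executed more economically than the literal corollary-of-Theorem-\ref{theorem.vjuhqd7} reading would allow.
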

In particular, we have
\begin{equation}
\sum\limits_{j = 0}^n {G_{j + 1} G_{j - 2} }  = (G_n  - G_1  + G_0 )(G_n  + G_1  - G_0 )
\end{equation}
and
\begin{equation}
2\sum\limits_{j = 0}^n {G_j G_{j - 1} }  = G_{n + 1} G_{n - 1}  + (G_n  - G_0 )(G_n  + G_0 ) + (G_1  - G_0 )(2G_0  - G_1 )\,.
\end{equation}
\begin{theorem}
The following identity holds for integers $k$ and $n$ and arbitrary $x$:
\[
\begin{split}
( - 1)^k 2x\sum\limits_{j = 0}^n {x^j G_{j + k} G_{j - k} }  &= \left( {xL_k^2  - (1 + x^2 )F_k^2  - 2xF_{k - 1} F_{k + 1} } \right)S_n (x)\\
&\qquad+ xF_k^2 \left( {x^{n + 1} G_n^2  - G_0^2  - G_1^2  + 2G_0 G_1 } \right)\\
&\qquad\quad- F_k^2 \left( {x^{n + 1} G_{n + 1}^2  - G_0^2 } \right)\,.
\end{split}
\]
\end{theorem}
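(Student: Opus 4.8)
The plan is to specialize Theorem~\ref{theorem.vjuhqd7} (equivalently, the pointwise identity \eqref{eq.cgd7f12}) to $s=-k$ and then invoke Theorem~\ref{theorem.mofbl42} to remove the resulting squared-term sums. Throughout, $S_n(x)$ means $S_{G,n}(x)=A_n(x;0)$, and I assume $k\neq 0$; the case $k=0$ is immediate, since with $F_0=0$, $L_0=2$, $F_{-1}F_1=1$ the claimed right-hand side collapses to $2xS_n(x)$, which is also the left-hand side.

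First I would set $s=-k$ in \eqref{eq.cgd7f12}. Using $F_{-k}=(-1)^{k-1}F_k$, $F_{-2k}=-F_{2k}$, and the duplication formula $F_{2k}=F_kL_k$ (so $F_{-2k}^2=F_k^2L_k^2$), equation \eqref{eq.cgd7f12} becomes $-2(-1)^kF_k^2\,G_{j+k}G_{j-k}=F_k^2\bigl(G_{j+k}^2+G_{j-k}^2-L_k^2G_j^2\bigr)$, and dividing by $-F_k^2$ gives the pointwise identity
\[
(-1)^k 2\,G_{j+k}G_{j-k}=L_k^2G_j^2-G_{j+k}^2-G_{j-k}^2\,.
\]
Multiplying through by $x^{j+1}$ and summing over $j=0,\ldots,n$ yields, in the notation \eqref{eq.be0cw7v},
\[
(-1)^k 2x\sum_{j=0}^n x^j G_{j+k}G_{j-k}=xL_k^2\,S_n(x)-xA_n(x;k)-xA_n(x;-k)\,.
\]

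Next I would evaluate $xA_n(x;k)$ by Theorem~\ref{theorem.mofbl42} and $xA_n(x;-k)$ by the same theorem with $k$ replaced by $-k$, reducing the negative indices via $F_{-k+1}=(-1)^kF_{k-1}$, $F_{-k-1}=(-1)^kF_{k+1}$, $F_{-k}=-(-1)^kF_k$. Adding the two expressions, the sign factors cancel and the coefficients combine through $F_{k+1}-F_{k-1}=F_k$ to give
\[
x\bigl(A_n(x;k)+A_n(x;-k)\bigr)=\bigl(2xF_{k+1}F_{k-1}+(1+x^2)F_k^2\bigr)S_n(x)-xF_k^2\bigl(x^{n+1}G_n^2-G_0^2-G_1^2+2G_0G_1\bigr)+F_k^2\bigl(x^{n+1}G_{n+1}^2-G_0^2\bigr)\,.
\]
Substituting this into the previous display and collecting the $S_n(x)$ terms, whose coefficient becomes $xL_k^2-2xF_{k+1}F_{k-1}-(1+x^2)F_k^2$, produces exactly the stated identity.

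The computation is essentially mechanical; the only delicate points are the sign bookkeeping when passing to negative Fibonacci indices in Theorem~\ref{theorem.mofbl42}, and recognizing $F_{-2k}^2=F_{2k}^2=F_k^2L_k^2$ so that the common factor $F_k^2$ coming from \eqref{eq.cgd7f12} can be cancelled cleanly---which is also what forces the separate (trivial) treatment of $k=0$.
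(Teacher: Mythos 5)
Your proof is correct and follows essentially the same route as the paper: reduce to the pointwise identity $(-1)^k2G_{j+k}G_{j-k}=L_k^2G_j^2-G_{j+k}^2-G_{j-k}^2$, multiply by $x^{j+1}$, sum, and evaluate $A_n(x;\pm k)$ via Theorem~\ref{theorem.mofbl42}. The only (harmless) difference is that the paper obtains the pointwise identity by squaring $L_kG_j=G_{j+k}+(-1)^kG_{j-k}$, which avoids dividing by $F_k^2$ and hence the separate $k=0$ check, whereas you derive it from \eqref{eq.cgd7f12} with $s=-k$; your sign bookkeeping and the final collection of coefficients are all accurate.
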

In particular, we have
\begin{equation}\label{eq.fq6qy3p}
\begin{split}
( - 1)^k 2\sum\limits_{j = 0}^n {G_{j + k} G_{j - k} }  &= (L_k^2  - 2F_{k - 1} F_{k + 1}  - 2F_k^2 )(G_n G_{n + 1}  - G_0 G_1  + G_0^2 )\\
&\qquad - F_k^2 \left( {G_{n - 1} G_{n + 2}  - 2G_0 G_1  + G_1^2 } \right)\,.
\end{split}
\end{equation}
\begin{proof}
Setting $r=0$ in the identity (see Adegoke \cite[equation (2.12)]{adegoke18f})
\begin{equation}
F_{2k}G_{j+r}=F_{r+k}G_{j+k}-F_{r-k}G_{j-k}
\end{equation}
gives
\begin{equation}
L_kG_j=G_{j+k}+(-1)^kG_{j-k}\,,
\end{equation}
from which by squaring, we get
\begin{equation}\label{eq.rv1jt4e}
L^2_kG_j^2=G_{j+k}^2+G_{j-k}^2+2(-1)^kG_{j+k}G_{j-k}\,.
\end{equation}
Multiply \eqref{eq.rv1jt4e} by $x^{j+1}$, re-arrange and sum over $j$ to obtain
\[
( - 1)^k 2x\sum\limits_{j = 0}^n {x^j G_{j + k} G_{j - k} }  = xL_k^2 \sum\limits_{j = 0}^n {x^j G_j^2 }  - x\sum\limits_{j = 0}^n {x^j G_{j + k} }  - x\sum\limits_{j = 0}^n {x^j G_{j - k} }\,,
\]
from which the result follows by using the identity of Theorem \ref{theorem.mofbl42} to evaluate the last two sums on the right hand side.
\end{proof}
\begin{corollary}[Generating function of $G_{j + k} G_{j - k}$]
\[
\begin{split}
&( - 1)^k 2x\sum\limits_{j = 0}^\infty  {x^jG_{j + k} G_{j - k} }\\
&\qquad\qquad= \frac{{(xL_k^2  - (1 + x^2 )F_k^2  - 2xF_{k + 1} F_{k - 1} )(x^2 G_2^2  - (2x^2  + 2x - 1)G_0^2  - (2x^2  - x)G_1^2 )}}{{x^3  - 2x^2  - 2x + 1}}\\
&\qquad\qquad\qquad - xF_k^2 (G_1  - G_0 )^2  + F_k^2 G_0^2\,. 
\end{split}
\]
\end{corollary}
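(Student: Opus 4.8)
The plan is to read off the generating function as the $n\to\infty$ limit of the partial-sum identity established in the theorem immediately preceding this corollary. Concretely, I would start from
\[
(-1)^k 2x\sum_{j=0}^n x^j G_{j+k}G_{j-k} = \bigl(xL_k^2 - (1+x^2)F_k^2 - 2xF_{k-1}F_{k+1}\bigr)S_{G,n}(x) + xF_k^2\bigl(x^{n+1}G_n^2 - G_0^2 - G_1^2 + 2G_0G_1\bigr) - F_k^2\bigl(x^{n+1}G_{n+1}^2 - G_0^2\bigr)
\]
and let $n\to\infty$, working first for $x$ in a punctured neighbourhood of $0$ where every series in sight converges.

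The next step is to dispose of the $n$-dependent pieces. Since $G_n$ grows geometrically with ratio the golden ratio $(1+\sqrt5)/2$, the quantities $x^{n+1}G_n^2$ and $x^{n+1}G_{n+1}^2$ tend to $0$ provided $|x|$ is smaller than the least modulus among the zeros of $x^3-2x^2-2x+1$ (which factors as $(x+1)(x^2-3x+1)$, so this threshold is $(3-\sqrt5)/2$); the same restriction makes $\sum_{j\ge 0}x^jG_{j+k}G_{j-k}$ and, through \eqref{eq.araqz6y}, $S_{G,n}(x)$ converge. Passing to the limit in \eqref{eq.araqz6y} annihilates its three $x^{n+\cdot}$ terms and yields
\[
S_{G,\infty}(x)=\frac{x^2G_2^2-(2x^2+2x-1)G_0^2-(2x^2-x)G_1^2}{x^3-2x^2-2x+1}.
\]

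Finally I would substitute this closed form into the limiting identity and simplify the two surviving constant brackets: the relation $-G_0^2-G_1^2+2G_0G_1=-(G_1-G_0)^2$ turns the middle term into $-xF_k^2(G_1-G_0)^2$, while $-F_k^2(-G_0^2)=F_k^2G_0^2$, which is exactly the claimed expression. No serious obstacle arises; the whole argument is bookkeeping once the preceding theorem is in hand, and the only genuinely analytic remark needed is the convergence restriction $|x|<(3-\sqrt5)/2$ used to justify both the termwise passage to the limit and the vanishing of the tails. The resulting equality of rational functions then persists wherever the denominator is nonzero, by the usual uniqueness of rational continuation.
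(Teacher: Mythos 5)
Your proposal is correct and is exactly the argument the paper intends: the corollary is the $n\to\infty$ limit of the immediately preceding partial-sum theorem, with the tail terms $x^{n+1}G_n^2$, $x^{n+1}G_{n+1}^2$ and the $x^{n+\cdot}$ pieces of $S_{G,n}(x)$ vanishing for $|x|<(3-\sqrt5)/2$, and the constants collapsing via $-(G_1-G_0)^2$ and $+G_0^2$. Your explicit convergence bound and the appeal to rational continuation are welcome additions the paper leaves implicit.
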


\section{Double binomial sums}\label{sec.binomial}
\begin{lemma}[{\cite[Lemma 5]{adegoke18c}}]\label{lem.h2de9i7}
Let $(X_r)$ be any arbitrary sequence, $X_r$ satisfying a four-term recurrence relation $hX_r=f_1X_{r-a}+f_2X_{r-b}+f_3X_{r-c}$, where $h$, $f_1$, $f_2$ and $f_3$ are arbitrary non-vanishing functions and $a$, $b$ and $c$ are integers. Then, the following identities hold:
\begin{equation}\label{eq.xfz9ytc}
\sum\limits_{j = 0}^n {\sum\limits_{i = 0}^j {\binom nj\binom jif_3^{n - j} f_2^{n + j - i} f_1^i X_{r - cn + (c - b)j + (b - a)i} } }  = h^nf_2^n X_r\,, 
\end{equation}
\begin{equation}
\sum\limits_{j = 0}^n {\sum\limits_{i = 0}^j {\binom nj\binom jif_2^{n - j} f_3^{n + j - i} f_1^i X_{r - bn + (b - c)j + (c - a)i} } }  = h^nf_3^n X_r\,,
\end{equation}
\begin{equation}\label{eq.vmi1nnt}
\sum\limits_{j = 0}^n {\sum\limits_{i = 0}^j {\binom nj\binom jif_1^{n - j} f_3^{n + j - i} f_2^i X_{r - an + (a - c)j + (c - b)i} } }  = h^nf_3^n X_r\,, 
\end{equation}
\begin{equation}
\sum\limits_{j = 0}^n {\sum\limits_{i = 0}^j {( - 1)^i \binom nj\binom jih^i f_3^{n - j} f_2^{j - i} X_{r - (c - a)n + (c - b)j + bi} } }  = ( - f_1 )^n X_r \,,
\end{equation}
\begin{equation}
\sum\limits_{j = 0}^n {\sum\limits_{i = 0}^j {( - 1)^i \binom nj\binom jih^i f_3^{n - j} f_1^{j - i} X_{r - (c - b)n + (c - a)j + ai} } }  = ( - f_2 )^n X_r
\end{equation}
and
\begin{equation}\label{eq.o7540wl}
\sum\limits_{j = 0}^n {\sum\limits_{i = 0}^j {( - 1)^i \binom nj\binom jih^i f_2^{n - j} f_1^{j - i} X_{r - (b - c)n + (b - a)j + ai} } }  = ( - f_3 )^n X_r \,.
\end{equation}

\end{lemma}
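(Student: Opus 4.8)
The plan is to obtain all six identities as expansions of powers of the shift operator acting on $(X_r)$. Let $E$ denote the shift, $E^{m}X_r=X_{r+m}$; since $h,f_1,f_2,f_3$ do not depend on $r$, they act as scalars and commute with $E$. The hypothesis $hX_r=f_1X_{r-a}+f_2X_{r-b}+f_3X_{r-c}$ is precisely the operator identity $(f_1E^{-a}+f_2E^{-b}+f_3E^{-c})X=hX$, so that
\[
\bigl(f_1E^{-a}+f_2E^{-b}+f_3E^{-c}\bigr)^{n}X=h^{n}X
\]
for every non-negative integer $n$.

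For the three identities with all-positive coefficients, I would expand the left-hand side by the multinomial theorem, giving
\[
h^{n}X_r=\sum_{p+q+t=n}\frac{n!}{p!\,q!\,t!}\,f_1^{p}f_2^{q}f_3^{t}\,X_{r-ap-bq-ct},
\]
and then rewrite the sum over triples $(p,q,t)$ with $p,q,t\ge 0$ and $p+q+t=n$ as a nested sum over $0\le i\le j\le n$. The substitution $(p,q,t)=(i,\,j-i,\,n-j)$ is a bijection (inverse $i=p$, $j=p+q$), under which $n!/(p!\,q!\,t!)=\binom nj\binom ji$ and the shift subscript $-ap-bq-ct$ becomes $-cn+(c-b)j+(b-a)i$; multiplying through by $f_2^{n}$ yields \eqref{eq.xfz9ytc}. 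The two remaining positive identities follow in exactly the same fashion from the relabelings $(p,q,t)=(i,\,n-j,\,j-i)$ and $(p,q,t)=(n-j,\,i,\,j-i)$, multiplying by $f_3^{n}$ in each case.

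For the three identities carrying signs, I would instead isolate one summand of the recurrence, e.g. $f_1E^{-a}=h-f_2E^{-b}-f_3E^{-c}$, raise to the $n$-th power, and expand:
\[
f_1^{n}X_{r-an}=\sum_{p+q+t=n}\frac{n!}{p!\,q!\,t!}\,(-1)^{q+t}\,h^{p}f_2^{q}f_3^{t}\,X_{r-bq-ct}.
\]
Replacing $r$ by $r+an$, using $(-1)^{q+t}=(-1)^{n-p}$, and absorbing the factor $(-1)^{n}$ into the left-hand side converts this into $(-f_1)^{n}X_r=\sum_{p+q+t=n}\frac{n!}{p!\,q!\,t!}(-1)^{p}h^{p}f_2^{q}f_3^{t}X_{r+an-bq-ct}$, and the same parametrization $(p,q,t)=(i,\,j-i,\,n-j)$ then gives the fourth identity. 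The fifth and sixth identities come identically from isolating $f_2E^{-b}$ and $f_3E^{-c}$ respectively.

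The only genuinely delicate point is the index bookkeeping: one must verify in each of the six cases that the chosen bijection between triples $(p,q,t)$ and pairs $(i,j)$ sends the shift subscript $-ap-bq-ct$ (or its translate) to exactly the expression displayed in the statement, and that the sign $(-1)^{q+t}$ collapses to the stated $(-1)^{i}$ with $(-f_\bullet)^{n}$ on the right. I expect this to be the main obstacle; an alternative, case-by-case proof by induction on $n$ using Pascal's rule $\binom{n+1}{j}=\binom nj+\binom n{j-1}$ is possible but does not treat the six identities uniformly.
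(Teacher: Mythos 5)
Your proof is correct. Note that the paper itself gives no argument for this lemma---it is imported verbatim from \cite[Lemma 5]{adegoke18c}---so there is no internal proof to compare against; your operator-power/multinomial-expansion argument is the natural one and it goes through. I checked the bookkeeping: with $(p,q,t)=(i,\,j-i,\,n-j)$ the subscript $-ap-bq-ct$ is indeed $-cn+(c-b)j+(b-a)i$ and $n!/(p!\,q!\,t!)=\binom nj\binom ji$, which gives \eqref{eq.xfz9ytc} after multiplying by $f_2^n$; the relabelings $(i,\,n-j,\,j-i)$ and $(n-j,\,i,\,j-i)$ give the second and third identities; and isolating $f_\bullet E^{-\bullet}$, expanding $(h-f_2E^{-b}-f_3E^{-c})^n$ etc., and using $(-1)^{q+t}=(-1)^{n-p}$ produces exactly the $(-1)^i h^i$ pattern and the $(-f_\bullet)^n$ right-hand sides in the last three. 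The one hypothesis you should state explicitly (the lemma's wording leaves it implicit) is that $h,f_1,f_2,f_3$ are independent of the running index $r$, since the whole argument rests on their commuting with the shift $E$.
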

\begin{theorem}\label{thm.hm4tzsz}
The following identities hold for non-negative integer $n$ and integers $s$, $k$, $m$, $r$:
\begin{equation}
\begin{split}
&\sum\limits_{j = 0}^n {\sum\limits_{i = 0}^j {( - 1)^{i + (s + k + 1)j} \binom nj\binom jiF_s^{n - j + i} F_k^{n + j} F_m^{2n - i} F_{m - s}^{n - j} F_{m - k}^{n + j - i} F_{s - k}^i G_{r + kn + (s - k)j + (m - s)i}^2 } }\\
&\qquad= (F_m F_k F_{m - k}^2 F_{m - s} F_{s - k} )^n G_r^2\,,
\end{split}
\end{equation}
\begin{equation}
\begin{split}
&\sum\limits_{j = 0}^n {\sum\limits_{i = 0}^j {( - 1)^{j + (s + k)(i + j)} \binom nj\binom jiF_s^{n + j} F_k^{n - j + i} F_m^{2n - i} F_{m - s}^{n + j - i} F_{m - k}^{n - j} F_{s - k}^i G_{r + sn + (k - s)j + (m - k)i}^2 } }\\ 
&\qquad = (-1)^{(s+k-1)n}(F_m F_s F_{m - s}^2 F_{m - k} F_{s - k} )^n G_r^2\,, 
\end{split}
\end{equation}
\begin{equation}
\begin{split}
&\sum\limits_{j = 0}^n {\sum\limits_{i = 0}^j {( - 1)^{(s + k)(i + j) + i} \binom nj\binom jiF_s^{n + j - i} F_k^i F_m^{n + j} F_{m - s}^{n + j - i} F_{m - k}^i G_{r + mn + (k - m)j + (s - k)i}^2 } }\\
&\qquad = ( - 1)^{(s + k)n} (F_m F_s F_{m - k} F_{s - k} F_{m - s}^2 )^n G_r^2\,.
\end{split}
\end{equation}
\end{theorem}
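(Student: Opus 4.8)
The plan is to observe that the sequence $X_r:=G_r^2$ satisfies a four-term recurrence relation, which is nothing but a rearrangement of Theorem~\ref{theorem.o1g1xkr}, and then to feed that recurrence into Lemma~\ref{lem.h2de9i7}. That lemma mechanically converts any four-term recurrence into six double binomial sum identities, and the three displayed identities are three of those six, presented after routine simplification.

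First I would take the identity of Theorem~\ref{theorem.o1g1xkr} with $j$ replaced by $r$ and solve it for $G_r^2$, obtaining
\[
F_{m-s}F_{m-k}F_{s-k}\,X_r = F_sF_mF_{m-s}\,X_{r+k} + (-1)^{s+k+1}F_kF_mF_{m-k}\,X_{r+s} + (-1)^{s+k}F_kF_sF_{s-k}\,X_{r+m},
\]
that is, a relation of the form $hX_r=f_1X_{r-a}+f_2X_{r-b}+f_3X_{r-c}$ with $a=-m$, $b=-s$, $c=-k$, $h=F_{m-s}F_{m-k}F_{s-k}$, $f_1=(-1)^{s+k}F_kF_sF_{s-k}$, $f_2=(-1)^{s+k+1}F_kF_mF_{m-k}$ and $f_3=F_sF_mF_{m-s}$; this is legitimate whenever $F_kF_sF_mF_{s-k}F_{m-k}F_{m-s}\ne 0$. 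I would then substitute these data into \eqref{eq.xfz9ytc}, into the second of the six identities of Lemma~\ref{lem.h2de9i7}, and into \eqref{eq.vmi1nnt}, producing respectively the first, second and third displayed identities; the index strings $r+kn+(s-k)j+(m-s)i$, $r+sn+(k-s)j+(m-k)i$ and $r+mn+(k-m)j+(s-k)i$ pin down exactly which of the six identities is meant and fix the pairing of $(a,b,c)$ with $(f_1,f_2,f_3)$.

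After substitution, only bookkeeping remains: expand each power such as $(F_sF_mF_{m-s})^{n-j}$, $(F_kF_mF_{m-k})^{n+j-i}$ and $(F_kF_sF_{s-k})^i$ into a monomial in the six Fibonacci numbers $F_k,F_s,F_m,F_{s-k},F_{m-k},F_{m-s}$ and collect exponents; gather the sign contributions $(-1)^{(s+k)(\cdot)}$ and $(-1)^{(s+k+1)(\cdot)}$ carried by $f_1$ and $f_2$, and — using that $-1$ raised to an even multiple of $n$ is $1$ — pull an overall factor $(-1)^{(\cdot)n}$ across the equality, so that the summand picks up the stated parity exponent while the right-hand side picks up the stated power of $-1$; and finally rewrite $h^nf_j^n\,X_r$ on the right-hand side as the stated product of Fibonacci numbers times $G_r^2$ (two of the six identities in Lemma~\ref{lem.h2de9i7} share the value $h^nf_3^n$, which is why two of the displayed right-hand sides carry the same Fibonacci product up to a sign).

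The hard part will be nothing conceptual, but rather keeping the bookkeeping exact: matching each displayed sum to the correct member of the sextet, holding the assignment of $(a,b,c)$ to $(f_1,f_2,f_3)$ consistent between the index string and the coefficient, and reducing the $(-1)$-exponents correctly modulo $2$. I would also need to dispose of the non-generic parameter values excluded above: when one of $F_k,F_s,F_m,F_{s-k},F_{m-k},F_{m-s}$ vanishes the recurrence degenerates, but then a factor such as $F_{s-k}^i$ in the summand forces $i=0$, the surviving single sum becomes an alternating binomial sum that vanishes for $n\ge 1$, and the right-hand side vanishes for $n\ge 1$ as well, while both sides reduce to $G_r^2$ when $n=0$; hence the identities continue to hold in those cases too.
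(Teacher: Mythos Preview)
Your proposal is correct and follows essentially the same route as the paper: rewrite Theorem~\ref{theorem.o1g1xkr} (with $j\mapsto r$) as a four-term recurrence $hX_r=f_1X_{r-a}+f_2X_{r-b}+f_3X_{r-c}$ for $X_r=G_r^2$ with the very same choices $a=-m$, $b=-s$, $c=-k$, $h=F_{m-s}F_{m-k}F_{s-k}$, $f_1=(-1)^{s+k}F_kF_sF_{s-k}$, $f_2=(-1)^{s+k+1}F_kF_mF_{m-k}$, $f_3=F_sF_mF_{m-s}$, and then substitute into the identities of Lemma~\ref{lem.h2de9i7}. Your added treatment of the degenerate cases where one of the Fibonacci factors vanishes is a detail the paper omits.
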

\begin{proof}
Change $j$ to $r$ and re-arrange the identity of Theorem \ref{theorem.o1g1xkr} as
\[
\begin{split}
F_{m - s} F_{m - k} F_{s - k} G_r^2&=( - 1)^{s + k} F_k F_s F_{s - k} G_{r + m}^2\\
&\qquad+( - 1)^{s + k +1} F_k F_m F_{m - k} G_{r + s}^2\\
&\qquad\qquad+F_s F_m F_{m - s} G_{r + k}^2\,.
\end{split}
\]
In Lemma \ref{lem.h2de9i7} with $X_r=G_r^2$, set $h=F_{m - s} F_{m - k} F_{s - k}$, $f_1=( - 1)^{s + k} F_k F_s F_{s - k}$, $f_2=( - 1)^{s + k +1} F_k F_m F_{m - k}$, $f_3=F_s F_m F_{m - s} G_{r + k}^2$, $a=-m$, $b=-s$ and $c=-k$ in identities \eqref{eq.xfz9ytc} -- \eqref{eq.o7540wl}.
\end{proof}

\hrule

\noindent 2010 {\it Mathematics Subject Classification}:
Primary 11B37; Secondary 65B10, 11B65, 11B39.

\noindent \emph{Keywords: }
Fibonacci number, Lucas number, Fibonacci-like number, generating function, double binomial summation identity.

\hrule

\noindent Concerned with sequences:


\end{document}